\pdfoutput=1
\documentclass[11pt,reqno]{amsart}

\usepackage[dvipsnames]{xcolor}
\usepackage{mathtools}
\usepackage{amssymb}
\usepackage[scr=boondox]{mathalpha}
\usepackage[low-sup]{subdepth}
\usepackage{tikz-cd}
\usepackage{amsthm}
\usepackage{hyperref}
\usepackage[capitalize]{cleveref}


\mathtoolsset{mathic=true}

\tikzcdset{
  arrow style=tikz,
  diagrams={>={Computer Modern Rightarrow}}
  }

\definecolor{dark-red}{rgb}{0.6,0,0}
\definecolor{dark-green}{rgb}{0,0.4,0}
\definecolor{nice-purple}{HTML}{8A0087}
\hypersetup{
  colorlinks,
  citecolor=dark-green,
  linkcolor=dark-red,
  urlcolor=nice-purple,
  pdfauthor={Matthew Bertucci},
  pdftitle={Taylor conditions over finite fields},
  }

\ifdefined\newcounteralias
  \newcommand{\newaliastheorem}[3]{%
    \newtheorem{#1}[#2]{#3}%
    }
\else
  \usepackage{aliascnt}
  \newcommand{\newaliastheorem}[3]{%
    \newaliascnt{#1}{#2}%
    \newtheorem{#1}[#1]{#3}%
    \aliascntresetthe{#1}%
    }
\fi
\newtheorem{claim}{Claim}

\newtheorem*{theorem*}{Theorem}
\newaliastheorem{lemma}{theorem}{Lemma}
\newaliastheorem{corollary}{theorem}{Corollary}
\newaliastheorem{proposition}{theorem}{Proposition}

\newtheorem{maintheorem}{Theorem}

\crefname{maintheorem}{Theorem}{Theorems} 
\Crefname{maintheorem}{Theorem}{Theorems}

\theoremstyle{definition}
\newtheorem*{definition}{Definition}
\newaliastheorem{example}{theorem}{Example}
\newtheorem{question}{Question}
\crefname{question}{Question}{Questions} 
\Crefname{question}{Question}{Questions}

\theoremstyle{remark}
\newaliastheorem{remark}{theorem}{Remark}

\counterwithin{equation}{subsection}
\crefname{equation}{}{}

\newcommand{\bA}{\mathbb{A}}
\newcommand{\bF}{\mathbb{F}}
\newcommand{\bL}{\mathbb{L}}

\newcommand{\bP}{\mathbb{P}}
\newcommand{\bV}{\mathbb{V}}
\newcommand{\calP}{\mathcal{P}}
\newcommand{\calQ}{\mathcal{Q}}
\newcommand{\calT}{\mathcal{T}}
\newcommand{\frakm}{\mathfrak{m}}
\newcommand{\high}{\mathrm{high}}
\newcommand{\homog}{\mathrm{homog}}
\newcommand{\into}{\hookrightarrow}
\newcommand{\low}{\mathrm{low}}
\newcommand{\med}{\mathrm{med}}
\newcommand{\onto}{\twoheadrightarrow}
\newcommand{\PP}{\mathscr{P}}
\newcommand{\red}{\mathrm{red}}
\newcommand{\sheaf}{\mathscr}

\DeclareMathOperator{\Der}{Der}
\DeclareMathOperator{\eval}{eval}
\DeclareMathOperator{\Hom}{Hom}

\DeclareMathOperator{\Prob}{Prob}
\DeclareMathOperator{\Spec}{Spec}
\DeclareMathOperator{\Sym}{Sym}

\DeclarePairedDelimiter\floor{\lfloor}{\rfloor}

\newcommand{\restr}[3][]{#2\csname#1\endcsname|\sb{#3}}

\title{Taylor conditions over finite fields}
\author{Matthew Bertucci}
\address{155 South 1400 East, Salt Lake City, UT 84103}
\email{bertucci@math.utah.edu}
\urladdr{https://www.math.utah.edu/~bertucci/}

\begin{document}

\begin{abstract}
We extend Poonen's Bertini theorem over finite fields to Taylor conditions arising from locally free quotients of the sheaf of differentials on projective space.
This is motivated by a result of Bilu and Howe in the motivic setting that allows for significantly more general Taylor conditions.
\end{abstract}

\maketitle

\tableofcontents

\section{Introduction}

For \(X\) a smooth quasiprojective subscheme of \(\bP^n\) over a finite field \(\bF_q\), Poonen showed in \cite{poonen:bertini} the existence of smooth hypersurface sections of \(X\) and computed the asymptotic density of smooth hypersurface sections to be \(\zeta_X(\dim X+1)^{-1}\), where \(\zeta_X\) is the zeta function of \(X\).
He also allowed for prescribing the first few coefficients of the Taylor expansions of hypersurfaces at finitely many points.
It is natural to extend the problem to more general conditions on the Taylor expansions.
As far as the author knows, questions like the following are not within the scope of Poonen's theorem or its existing generalizations.%
\footnote{These include \cite{bucur-kedlaya}, \cite{erman-wood}, \cite{ghosh-krishna}, \cite{gunther}, \cite{poonen:subvar}, and \cite{wutz:thesis}.}%

\begin{question} \label{curve-q}
Assume \(\operatorname{char}(\bF_q)\neq 2\).
Choose a finite, reduced, degree 4 subscheme \(Y\) of \(\bP^2_{\bF_q}\) whose points are geometrically in general position. 
Let \(\iota:X\into \bP^2_{\bF_q}\) be a curve whose geometric points are in general position with the points of \(Y\).
For each closed point \(x\in X\), there is a unique smooth conic \(C_x\) passing through the four points and \(x\).
What is the probability that a random plane curve intersects \(C_x\) transversely at \(x\) for each closed point \(x\in X\)?
\end{question}

Note that above, we do not assume $X$ is smooth.
To answer the question we will remove the necessity that properties of $X$ be related to Taylor conditions parameterized by $X$. See \cref{rel-smooth-app} for a concrete use-case.
The answer to \cref{curve-q} follows as a special case, explained in \cref{curve-ex}, and requires considering Taylor conditions arising from locally free quotients of the sheaf of differentials on projective space.
Such Taylor conditions are addressed in the following theorem which is the main result of this paper.
See \cref{sheaf-PP} for the definition of and important results about the sheaf of principal parts \(\PP^1\).

\begin{maintheorem} \label{subsheaf-bertini}
Let \(X\) be a quasiprojective subscheme of \(\bP^n_{\bF_q}\) of dimension \(m\) with locally closed embedding \(\iota\).
Let \(\sheaf{Q}\) be a locally free quotient of \(\iota^*\Omega^1_{\bP^n}\) of rank \(\ell\geq m\), and let \(\sheaf{K}\) denote the kernel of \(\iota^*\Omega^1_{\bP^n}\onto\sheaf{Q}\).
For each \(d\), define
  \[
    \sheaf{E}_d := \bigl(\iota^*\PP^1(\sheaf{O}_{\bP^n}(d))\bigr)/\sheaf{K}(d)
  \]
where we view \(\sheaf{K}(d)\) as a subsheaf of \(\iota^*\PP^1(\sheaf{O}_{\bP^n}(d))\) via the exact sequence in the top row of the commutative diagram
  \[
    \begin{tikzcd}[column sep=small]
    0 \ar[r] &
    \iota^*\Omega^1_{\bP^n}(d) \ar[r] \ar[d] &
    \iota^*\PP^1(\sheaf{O}_{\bP^n}(d)) \ar[r] \ar[d] &
    \sheaf{O}_X(d) \ar[r] &
    0. \\
    &
    \sheaf{Q}(d) \ar[r] &
    \sheaf{E}_d
    \end{tikzcd}
  \]
This defines a \(1\)-infinitesimal Taylor condition \(\calT_d\) on \(\bP^n\) such that at each closed point \(x\), \(\calT_{d,x}\subseteq\sheaf{O}_{\bP^n}(d)_x/\frakm_x^2\) is given by not vanishing in the fiber of \(\sheaf{E}_d\) at \(x\).
By convention, \(\calT_d\) is always satisfied if \(x\notin X\).

Define
  \[
    \calP_d := \{f\in S_d\mid f \text{ satisfies \(\calT_d\) at all closed \(x\in\bP^n\)}\}.
  \]
Then
  \[
    \lim_{d\to\infty} \Prob(f\in \calP_d)
    = \prod_{\textup{closed } x\in X} \bigl(1-q^{-(\ell+1)\deg(x)}\bigr) = \zeta_X(\ell+1)^{-1}.
  \]
\end{maintheorem}

For \(X\) smooth, taking \(\sheaf{Q}=\Omega_X^1\) recovers Poonen's Bertini theorem since in that case, the nonvanishing of a section \(f\) in the fiber \(\restr{\sheaf{E}_d}{x}=\sheaf{O}_X(d)_x/\frakm_x^2\) is exactly the condition of the hypersurface section \(H_f\cap X\) being smooth at \(x\).

As in Poonen's discussion after \cite[Theorem 1.1]{poonen:bertini}, this Euler product is the expected probability if we assume that satisfaction of the Taylor condition $\calT_d$ is independent across all $x\in X$.
Indeed, a section $f$ satisfying $\calT_d$ at $x$ amounts to $\ell+1$ linear conditions on the Taylor coefficients of $f$ over the residue field $\kappa(x)$, so the probability of satisfaction at $x$ is
  \[
    \frac{q^{(\ell+1)\deg(x)}-1}{q^{(\ell+1)\deg(x)}}
    = 1-q^{-(\ell+1)\deg(x)}
  \]
Independence across all $x\in X$ would give the product in the theorem.
As is the case for Poonen, independence fails when $\deg(x)$ is large relative to $d$ and the difficult part of the proof is showing the error term vanishes as $d$ approaches $\infty$. For this, we need $\ell$ at least as large as the dimension $m$ of $X$.

Regarding \cref{curve-q}, we will define a suitable sheaf \(\sheaf{Q}\) in \cref{curve-ex} whose fiber at a closed point \(x\) is the cotangent space of \(C_x\) at \(x\).

Following Poonen, we will prove \cref{subsheaf-bertini} as a special case of the following more general theorem that allows one to prescribe the first few Taylor expansions at finitely many points.

\begin{maintheorem} \label{subsheaf-TC-finite}
Let \(X\) be a quasiprojective subscheme of \(\bP^n_{\bF_q}\) and \(Z\) a finite subscheme of \(\bP^n\). Fix a subset \(T\subseteq H^0(Z,\sheaf{O}_Z)\). On each connected component \(Z_i\) of \(Z\), fix a nonvanishing coordinate \(x_{j_i}\). For \(f\in S_d\), write \(\restr{f}{Z}\) for the element of \(H^0(Z,\sheaf{O}_Z)\) that on each \(Z_i\) equals the restriction of \(x_{j_i}^{-d}f\) to \(Z_i\).

Assume \(U:=X-(Z\cap X)\) has dimension \(m\) with locally closed embedding \(\iota:U\into\bP^n\). For a locally free quotient \(\sheaf{Q}\) of \(\iota^*\Omega^1_{\bP^n}\) of rank \(\ell\geq m\), define \(\sheaf{E}_d\) and \(\calT_d\) as in \cref{subsheaf-bertini}.

Define
  \[
    \calP_d := \{f\in S_d\mid f \text{ satisfies \(\calT_d\) at all closed \(x\in\bP^n-Z\) and \(\restr{f}{Z}\in T\)}\}.
  \]
Then
  \[
    \lim_{d\to\infty} \Prob(f\in \calP_d)
    = \frac{\# T}{\# H^0(Z,\sheaf{O}_Z)} \zeta_U(\ell+1)^{-1}.
  \]
\end{maintheorem}

The proof is an adaptation of Poonen's original proof; the main innovation is observing that the Taylor condition parameterized by \(X\) need not have anything to do with properties of \(X\).

Again following Poonen, we will prove a stronger version of \cref{subsheaf-TC-finite} that allows us to impose Taylor conditions of arbitrary order at infinitely many points so long as the conditions are no stronger than nonvanishing in locally free quotients of the sheaf of principal parts relative to a finite set of varieties.

\begin{maintheorem} \label{subsheaf-TC-infinite}
Let \(X_1,\dots,X_u\) be quasiprojective subschemes of \(\bP^n_{\bF_q}\) of dimensions \(\dim X_i=m_i\) with locally closed embeddings \(\iota_1,\dots,\iota_u\), respectively.
For each \(i\), let \(\sheaf{Q}_i\) be a locally free quotient of \(\iota_i^*\Omega^1_{\bP^n}\) of rank \(\ell_i\geq m_i\).
Define the sheaves \(\sheaf{E}_{i,d}\) and Taylor conditions \(\calT_{i,d}\) as in \cref{subsheaf-bertini}.

For each closed point \(x\in\bP^n\), fix a positive integer \(M_x\), a nonvanishing coordinate \(x_j\), and a subset \(A_x\subseteq \sheaf{O}_{\bP^n,x}/\frakm_x^{M_x}\).
For \(f\in S_d\), write \(\restr{f}{x}\) for the image of \(x_j^{-d}f\) in \(\sheaf{O}_{\bP^n,x}/\frakm_x^{M_x}\).
Assume that the sets \(A_x\) have been chosen so that for all but finitely many \(x\), \(\restr{f}{x}\in A_x\) whenever \(f\in S_d\) satisfies \(\calT_{i,d}\) at \(x\) for all \(i\).

Define
  \[
    \calP_d := \{f\in S_d\mid \text{\(\restr{f}{x}\in A_x\) for all closed \(x\in\bP^n\)}\}.
  \]
Then
  \[
    \lim_{d\to\infty} \Prob(f\in \calP_d)
    = \prod_{\textup{closed } x\in X} \frac{\# A_x}{\# \sheaf{O}_{\bP^n,x}/\frakm_x^{M_x}}.
  \]
\end{maintheorem}

\subsection{Motivation}

Theorems \ref{subsheaf-bertini}, \ref{subsheaf-TC-finite}, and \ref{subsheaf-TC-infinite} are motivated by the more general Taylor conditions considered by \cite{bilu-howe} in the motivic setting, i.e., in the Grothendieck ring of varieties.
There the authors ask if an arithmetic analog of the following theorem holds over \(\bF_q\) (see the paper for notation):

\begin{theorem*}[{\cite[Theorem B]{bilu-howe}}]
Fix \(f:X\to S\), a proper map of varieties over a field \(K\), \(\sheaf{F}\) a coherent sheaf on \(X\), \(\sheaf{L}\) a relatively ample line bundle on \(X\), and \(r,M\geq 0\).
Then, there is an \(\epsilon>0\) such that as \(T\) ranges over all \(r\)-infinitesimal Taylor conditions on \(\sheaf{F}(d)=\sheaf{F}\otimes\sheaf{L}^d\) with \(M\)-admissible complement,
  \[
    \frac{[\bV(f_*\sheaf{F}(d))^{T\text{-}\,\mathrm{everywhere}}]}{[\bV(f_*\sheaf{F}(d))]}
    = \prod_{x\in X/S} \restr[bigg]{\biggl(1-\frac{[T^c]_x}{[\bV(\PP^r_{\!/S}\sheaf{F}(d))]_x}t\biggr)}{t=1} + O(\bL^{-\epsilon d})
  \]
in \(\,\widehat{\!\widetilde{\mathcal{M}}}_X\).
\end{theorem*}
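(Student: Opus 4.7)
The plan is to adapt Poonen's \low/\med/\high sieve to the motivic setting of the completed Grothendieck ring $\widehat{\widetilde{\!\mathcal{M}\!}}_X$, in which $\bL^{-d}$ is a null sequence and products indexed by $x\in X/S$ converge once their factors approach $1$. I would sort the contributions to the bad locus (sections failing the Taylor condition somewhere) by the degree of the point at which $T$ fails, and treat the three ranges of degree separately.

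Choose thresholds $D_\low = c_1 d$ and $D_\med = c_2 d$, with $c_1,c_2>0$ depending on $r$, $M$, $f$, and the relative ampleness data of $\sheaf{L}$. For low-degree points, Serre vanishing along the fibers of $f$ shows that for $d$ large, the $r$-jet evaluation
\[
    f_*\sheaf{F}(d) \longrightarrow \bigoplus_{\deg x\leq D_\low} \PP^r_{\!/S}\sheaf{F}(d)_x
\]
is surjective as a map of locally free sheaves on $S$. Motivically this realizes the subscheme of $\bV(f_*\sheaf{F}(d))$ whose sections satisfy $T$ at every low-degree point as a fiber bundle over $S$ whose fibers are products of the $T_x$. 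Its class is $[\bV(f_*\sheaf{F}(d))]\prod_{\deg x\leq D_\low}\bigl(1-[T^c]_x/[\bV(\PP^r_{\!/S}\sheaf{F}(d))]_x\bigr)$, which is exactly the truncated right-hand side at $t=1$.

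For points of medium degree ($D_\low<\deg x\leq D_\med$), I would realize the bad locus as the projection onto $\bV(f_*\sheaf{F}(d))$ of an incidence variety in $X\times_S \bV(f_*\sheaf{F}(d))$ whose fiber over $x$ is affine over $T^c_x$, then apply the $M$-admissibility of $T^c$. By hypothesis this gives a fiberwise codimension bound growing linearly in $\deg x$, and combined with the standard dimension estimate on the motivic class of the degree-$n$ stratum of $X/S$ (polynomial in $\bL^n$ with degree $\dim X/S$), summing a convergent geometric series produces the required $O(\bL^{-\epsilon d})$ contribution.

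The main obstacle is the high-degree part ($\deg x > D_\med$), where the count of potential bad points is exponential in $\deg x$ and a naive union bound fails. Following Poonen's key trick, I would trade the "vanishing of $f$ to the appropriate order at $x$" condition for a condition on first partial derivatives of $f$, and then prove an effective motivic dimension bound for the sub-variety of $\bV(f_*\sheaf{F}(d))$ where sufficiently many partial derivatives of $f$ become linearly dependent on a jet over $x$. The $M$-admissibility assumption is precisely what boosts the fiberwise codimension past the break-even point where the exponentially many high-degree candidates are beaten by the motivic saving in each fiber. Once the three ranges are assembled with compatible choices of $c_1$, $c_2$, $\epsilon$, the discrepancy between the truncated low-degree product and the full product at $t=1$ is itself $O(\bL^{-cd})$ (each missing factor differs from $1$ by $O(\bL^{-cd})$), completing the asymptotic.
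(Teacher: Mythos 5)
First, a point of scope: the statement you are addressing is \cite[Theorem B]{bilu-howe}, which this paper only quotes as motivation for \cref{subsheaf-bertini}; no proof of it appears here, so there is no ``paper's own proof'' to compare against. What the paper actually proves is the arithmetic analog over $\bF_q$, for $r=1$ and for Taylor conditions of a special algebraic form (locally free quotients of $i^*\Omega^1_{\bP^n}$), and it explicitly leaves open (\cref{diag}) what the right notion of admissibility should be over $\bF_q$.

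On the merits, your low- and medium-degree steps are plausible in outline, but the high-degree step has a genuine gap. Theorem B is over an arbitrary field $K$, and you propose to invoke ``Poonen's key trick'' of trading vanishing for conditions on partial derivatives; in Poonen's proof that trick is the decomposition $f=f_0+\sum g_i^p t_i+h^p$, which only exists in characteristic $p$. Worse, even in characteristic $p$ it is an argument about uniform distribution on the finite set of $\bF_q$-points of $A_{\leq d}$: the map $g\mapsto g^p$ is a bijection on $\bF_q$-points but not an isomorphism of varieties, so it does not produce an identity of classes in the Grothendieck ring. A motivic high-degree bound has to come from an honest dimension estimate on the incidence variety of pairs (high-degree point, section vanishing in the fiber of $\PP^r\sheaf{F}(d)$), and supplying that estimate is where the real content of \cite{bilu-howe} lies; your sketch asserts it rather than proves it. Separately, the product over $x\in X/S$ is a motivic Euler product defined via symmetric powers of $X/S$, and both its convergence in $\widehat{\widetilde{\!\mathcal{M}\!}}_X$ and the comparison between the truncated low-degree product and the full product require uniform dimension bounds on symmetric-power strata (Bilu's machinery), not merely the observation that each omitted factor is individually close to $1$.
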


For Bilu and Howe, a Taylor condition is just a constructible subset of the sheaf of principal parts (viewed as a scheme) and the \(M\)-admissible condition ensures the motivic Euler product converges.
In the arithmetic setting, we also need a good notion of ``admissibility'' for a Taylor condition such that the probability that the condition is satisfied everywhere factors into the local probabilities at closed points.
A counterexample to the most general such Taylor conditions is given in \cref{diag}, suggesting more structure, possibly algebraic as in \cref{subsheaf-bertini}, is necessary.

\subsection{Organization}

In \cref{notation} we set up our notation and give some properties of the sheaf of principal parts.
\cref{counterex} contains a counterexample for the most general Taylor conditions.
In \cref{proof-sec} we prove Theorems \ref{subsheaf-bertini}, \ref{subsheaf-TC-finite}, and \ref{subsheaf-TC-infinite}, and in \cref{applications} we give some applications.

\subsection{Acknowledgments}

We thank the author's PhD advisor, Sean Howe, for numerous useful conversations that lead to the formulation and proof of the theorems above.
The author was partially supported during the preparation of this work by the University of Utah's NSF Research Training Grant \href{https://www.nsf.gov/awardsearch/showAward?AWD_ID=1840190}{\#1840190}.

\section{Notation and definitions} \label{notation}

Throughout, let \(q\) be a power of a prime \(p\) and \(\bF_q\) the field with \(q\) elements.
Let \(S=\bF_q[x_0,\dots,x_n]\) and identify \(S_d:=H^0(\bP_{\bF_q}^n,\sheaf{O}(d))\) with degree \(d\) homogeneous polynomials in \(S\).
Let \(A=\bF_q[x_1,\dots,x_n]\) and \(A_{\leq d}\) the polynomials in \(A\) of degree at most \(d\).

\begin{definition}
Let \(\sheaf{F}\) be a coherent sheaf on a proper \(\bF_q\)-scheme \(X\). An \emph{\(r\)-infinitesimal Taylor condition on \(\sheaf{F}\) at a closed point \(x \in X\)} is a subset
  \[
    \calT_x
    \subseteq \sheaf{F}_x \otimes_{\sheaf{O}_{X,x}} \sheaf{O}_{X,x}/\frakm_x^{r+1}
    =: \restr{\sheaf{F}}{x^{(r)}}.
  \]
An \emph{\(r\)-infinitesimal Taylor condition \(\calT\) on \(\sheaf{F}\)} is a choice of an \(r\)-infinitesimal Taylor condition \(\calT_x\) at \(x\) on \(\sheaf{F}\) for each closed point \(x\). 

We say that a global section \(s\in H^0(X,\sheaf{F})\) \emph{satisfies \(\calT\) at \(x\in X\)} if its image in \(\restr{\sheaf{F}}{x^{(r)}}\) lies in \(\calT_x\), and \emph{satisfies \(\calT\)} if it satisfies \(\calT\) at every closed point \(x\in X\).
\end{definition}

\begin{definition}
Let \(\sheaf{F}\) be a coherent sheaf on a proper \(\bF_q\)-scheme \(X\).
For a subset \(\calP\) of the finite dimensional \(\bF_q\)-vector space \(H^0(X,\sheaf{F})\), denote by \(\Prob(s\in \calP)\) the probability that a random uniformly distributed global section \(s\) of \(\sheaf{F}\) belongs to \(\calP\), i.e.,
  \[
    \Prob(s\in \calP) := \frac{\# \calP}{\# H^0(X,\sheaf{F})}.
  \]
\end{definition}

\begin{remark}
The definition above differs from that of \cite{erman-wood}.
When they write \(\Prob(s\in \calP)\), they mean (in our notation) \(\lim_{d\to\infty}\Prob(s_d\in \calP_d)\) where for each \(d\geq 0\), \(\calP_d\subseteq H^0(X,\sheaf{F}(d))\) and \(s_d\) is a uniform random global section of \(\sheaf{F}(d)\).
\end{remark}

\begin{remark}
Our definition of a Taylor condition assumes \(X\) is proper over \(\bF_q\) so that \(H^0(X,\sheaf{F})\) is a finitely generated \(\bF_q\)-vector space.
This does not contradict allowing quasiprojective \(X\) in \cref{subsheaf-bertini} since there, the Taylor condition is actually on \(\bP^n\).
\end{remark}

\begin{remark}
While we define $r$-infinitesimal Taylor conditions above, our results only concern $1$-infinitesimal Taylor conditions. We give the more general definition to suggest the necessary setup for an arithmetic version of \cite[Theorem B]{bilu-howe} over $\bF_q$. Our results are a first step in that direction.
\end{remark}

\subsection{Sheaves of principal parts} \label{sheaf-PP}

We recall the definition of the sheaf of principal parts and collect some of its relevant properties.

\begin{definition}
Let \(X\to S\) be a morphism of schemes and \(\sheaf{F}\) an \(\sheaf{O}_X\)-module.
Let \(\Delta^{(r)}\) be the \(r\)-th infinitesimal neighborhood of the diagonal \(\Delta\) in \(X\times_S X\) and let \(\delta^{(r)}:\Delta^{(r)}\to X\times_S X\) be the canonical morphism.
Denote by \(\pi_1,\pi_2:X\times_S X\to X\) the corresponding projections and set \(p=\pi_1\circ \delta^{(r)}\) and \(q=\pi_2\circ \delta^{(r)}\).
The \emph{sheaf of \(r\)-th order principal parts of \(\sheaf{F}\) on \(X\) over \(S\)} is
  \[
    \PP_{X/S}^r(\sheaf{F}) := p_*(q^*\sheaf{F}).
  \]
By definition this is an \(\sheaf{O}_X\)-module.
If \(S\) is clear from context, we write \(\PP_X^r(\sheaf{F})\) for \(\PP_{X/S}^r(\sheaf{F})\); if \(X\) is also clear, we write \(\PP^r(\sheaf{F})\).
\end{definition}

References given below are not necessarily the original source of the result.

\begin{lemma}[{\cite[Proposition 16.7.3]{egaIVpartIV}}]
If \(\sheaf{F}\) is quasi-coherent (resp. coherent, of finite type, of finite presentation), then \(\PP^r_{X/S}(\sheaf{F})\) is quasi-coherent (resp. coherent, of finite type, of finite presentation).
\end{lemma}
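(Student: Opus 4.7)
The plan is to reduce to the affine case and translate everything into a statement about modules. Let $\Spec R \subseteq S$ and $\Spec A \subseteq X$ be open affines with $\Spec A$ mapping into $\Spec R$, and let $M$ be the $A$-module corresponding to $\sheaf{F}$ on $\Spec A$. Writing $J = \ker(A \otimes_R A \to A)$ for the diagonal ideal, the $r$-th infinitesimal neighborhood of the diagonal is locally $\Spec P$ with $P := (A \otimes_R A)/J^{r+1}$. The ring $P$ carries two commuting $A$-module structures coming from the left and right tensor factors; these correspond to the projections $p$ and $q$. Unwinding the definition, the module corresponding to $\PP^r_{X/S}(\sheaf{F})$ on $\Spec A$ is $M \otimes_A P$, where the tensor product uses the $q$-structure on $P$ and the resulting module is viewed as an $A$-module via the $p$-structure.

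From this description, quasi-coherence is immediate. For the remaining three properties, the crucial input is that $P$ is itself of finite type (resp.\ finite presentation, coherent) as a left $A$-module, under the hypotheses on $X \to S$ implicit in the statement (being locally of finite type, etc., as spelled out in EGA). Granting this, the corresponding finiteness property of $M \otimes_A P$ transfers from the hypothesis on $M$ by standard arguments: a finite generating set of $M$ combined with a finite generating set of $P$ gives a finite generating set of $M \otimes_A P$, and analogously for presentations.

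The main work therefore lies in establishing the finiteness of $P$ over $A$ via the left factor. Finite generation is straightforward: choosing $R$-algebra generators $a_1, \dots, a_N$ of $A$, the elements $\xi_i := a_i \otimes 1 - 1 \otimes a_i$ generate $J$ as an ideal of $A \otimes_R A$, so monomials in the $\xi_i$ of degree at most $r$ generate $P$ as a left $A$-module. The main obstacle to a self-contained proof is handling the finite presentation and coherence cases, where one must carefully bound the relations among these monomials; this bookkeeping is exactly what is worked out in the EGA reference.
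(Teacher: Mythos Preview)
The paper does not supply a proof of this lemma at all; it simply records the statement and cites \cite[Proposition 16.7.3]{egaIVpartIV} as the source. So there is nothing to compare your argument against.

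That said, your sketch is a faithful outline of the EGA argument: reduce to affines, identify $\PP^r_{X/S}(\sheaf{F})$ on $\Spec A$ with $P \otimes_A M$ for $P=(A\otimes_R A)/J^{r+1}$, and then transfer the finiteness property of $M$ using the fact that $P$ is finitely generated (resp.\ finitely presented) as a left $A$-module when $A$ is finitely generated (resp.\ finitely presented) over $R$. Your observation that the $\xi_i = a_i\otimes 1 - 1\otimes a_i$ generate $J$ and hence that monomials of degree $\leq r$ in the $\xi_i$ generate $P$ over $A$ is exactly right. Two small remarks: first, you are correct that the finite-type and finite-presentation cases require corresponding hypotheses on $X\to S$, which the paper suppresses; second, the ``coherent'' case as stated is typically handled by working over a locally Noetherian base so that coherent coincides with finite type, which is the setting of the paper ($S=\Spec\bF_q$) anyway.
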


\begin{lemma}[{\cite[Corollary 16.4.12]{egaIVpartIV} and \cite[III, Lemma 2.1 and Proposition 2.2]{bennett}}] \label{PP-fiber}
If \(S=\Spec k\) for \(k\) a field, \(\sheaf{F}\) is quasi-coherent, and \(x\in X\) is rational over \(k\), then the fiber \(\restr{\PP_{X/S}^r(\sheaf{F})}{x}=\PP_{X/S}^r(\sheaf{F})_x\otimes_{\sheaf{O}_{X,x}}\kappa(x)\) is canonically isomorphic to \(\sheaf{F}_{X,x}/\frakm_x^{r+1}\).

If \(k\) is perfect, then the same is true for any closed point \(x\in X\).
\end{lemma}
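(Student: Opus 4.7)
The plan is to reduce immediately to the affine setting $X = \Spec A$ with $\sheaf{F} = \widetilde{M}$. Writing $\mu \colon A \otimes_k A \to A$ for the multiplication and $I = \ker \mu$, we identify $\Delta^{(r)}$ on this chart with $\Spec((A \otimes_k A)/I^{r+1})$. Unpacking the definition, $\PP^r_{X/k}(\sheaf{F})$ is the quasi-coherent sheaf associated to
\[
    P := \bigl((A \otimes_k A)/I^{r+1}\bigr) \otimes_A M,
\]
where the tensor over $A$ uses the right-factor $A$-module structure on $A \otimes_k A$, and the remaining $A$-module structure on $P$ is via the left factor.

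For a $k$-rational point $x$ with maximal ideal $\frakm \subset A$, the fiber $P \otimes_A k$ is computed by tensoring along the left $A$-action. Since the two $A$-actions on $A \otimes_k A$ commute, this rearranges as
\[
    P \otimes_A k = \bigl((A \otimes_k A)/I^{r+1} \otimes_A k\bigr) \otimes_A M.
\]
The key step is the identification $(A \otimes_k A) \otimes_A k \cong A$ via $[a \otimes b] \mapsto a(x) \cdot b$, under which the generators $a \otimes 1 - 1 \otimes a$ of $I$ are sent to $a(x) - a$; these range over all of $\frakm$ as $a$ varies. Hence $I$ maps isomorphically onto $\frakm$, and the fiber becomes $(A/\frakm^{r+1}) \otimes_A M = M/\frakm^{r+1} M$. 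Because $A \setminus \frakm$ already acts invertibly on the local Artinian quotient, this coincides with $M_\frakm/\frakm^{r+1} M_\frakm = \sheaf{F}_x/\frakm_x^{r+1}$, yielding the desired canonical isomorphism.

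For the perfect-field case at a non-rational closed point $x$, I would argue via flat base change along $k \hookrightarrow \overline{k}$. A direct check from the diagonal description shows that formation of principal parts commutes with flat base change on $S$, so $\PP^r_{X/k}(\sheaf{F}) \otimes_k \overline{k} \cong \PP^r_{X_{\overline{k}}/\overline{k}}(\sheaf{F}_{\overline{k}})$. Perfectness makes $\kappa(x)/k$ separable, so $\kappa(x) \otimes_k \overline{k} \cong \prod_{\bar{x} \mid x} \overline{k}$, and correspondingly $\sheaf{O}_{X,x} \otimes_k \overline{k}$ splits as the product of the local rings $\sheaf{O}_{X_{\overline{k}}, \bar{x}}$ at the finitely many $\overline{k}$-points $\bar{x}$ lying over $x$. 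Applying the rational case at each $\bar{x}$ produces an isomorphism of fibers over $\overline{k}$, and faithfully flat descent assembles these into the desired canonical isomorphism over $\kappa(x)$.

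The main obstacle is this descent step: one has to verify that the fiberwise identification obtained over $\overline{k}$ is $\operatorname{Gal}(\overline{k}/k)$-equivariant, equivalently that it is $\kappa(x) \otimes_k \overline{k}$-linear, so that it arises from an isomorphism already over $\kappa(x)$. Concretely, this boils down to matching the map coming from the principal parts construction with the canonical coefficient-field embedding $\kappa(x) \hookrightarrow \sheaf{O}_{X,x}/\frakm_x^{r+1}$ afforded by Hensel's lemma applied to a separable generator of $\kappa(x)/k$—elementary but slightly finicky.
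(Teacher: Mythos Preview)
The paper does not supply its own proof of this lemma; it simply cites EGA~IV, 16.4.12 and Bennett. So there is no in-paper argument to compare against, and your write-up is already doing more than the paper does.

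Your affine computation for the rational-point case is correct in substance, but one phrase is off: the map $I \to \frakm$ induced by $a \otimes b \mapsto a(x)\,b$ is \emph{surjective}, not an isomorphism---its kernel contains $(\frakm \otimes_k A) \cap I$, which is nonzero as soon as $A \neq k$. What you actually need, and what follows immediately because $A \otimes_k A \to A$ is a surjective ring homomorphism, is that $I^{r+1}$ surjects onto $\frakm^{r+1}$; with that correction the rest of the argument goes through unchanged.

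For the perfect-field case your base-change-and-descent outline is sound, and you correctly flag Galois-equivariance of the fiberwise isomorphism as the point requiring care. A slightly cleaner route that sidesteps descent: separability of $\kappa(x)/k$ yields, by Hensel, a unique $k$-algebra section $\sigma\colon \kappa(x) \hookrightarrow A/\frakm^{r+1}$, and one can then rerun the rational-point computation with $\kappa(x)$ in the role of $k$, using $\sigma$ to view $A/\frakm^{r+1}$ as a $\kappa(x)$-algebra. This is essentially the equivariance check repackaged to avoid passing to $\overline{k}$.
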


\begin{remark}
In our notation, \cref{PP-fiber} says that an \(r\)-infinitesimal Taylor condition on \(\sheaf{F}\) is just a choice of subset of the fiber of \(\PP^r_{X/k}(\sheaf{F})\) for every closed \(x\in X\).
\end{remark}

\begin{lemma}[{\cite[A, Proposition 3.4]{curves:grass}}] \label{PP-exact}
If \(X\to S\) is differentially smooth (see \cite[16.10]{egaIVpartIV}), and \(\sheaf{F}\) is locally free on \(X\), then there is an exact sequence of \(\sheaf{O}_X\)-modules
\begin{equation}
\label{eq:seqparts}
  \begin{tikzcd}[column sep=small]
    0 \ar[r] &
	\Sym^r_{\sheaf{O}_X}(\Omega^1_{X/S})\otimes_{\sheaf{O}_X}\sheaf{F} \ar[r] &
	\PP^r_{X/S}(\sheaf{F}) \ar[r] &
	\PP^{r-1}_{X/S}(\sheaf{F}) \ar[r] &
	0.
  \end{tikzcd}
\end{equation}
If \(X,Y\) are smooth \(S\)-schemes, \(f:X\to Y\) is a morphism of \(S\)-schemes, and \(\sheaf{G}\) is locally free on \(Y\), then there is a map of exact sequences of \(\sheaf{O}_X\)-modules
\begin{equation*}
  \begin{tikzcd}[column sep=small]
	0 \ar[r] &
	\Sym^r_{\sheaf{O}_X}(f^*\Omega^1_{Y/S})\otimes_{\sheaf{O}_X}f^*\sheaf{G} \ar[r] \ar[d] &
	f^*\PP^r_{Y/S}(\sheaf{G}) \ar[r] \ar[d] &
	f^*\PP^{r-1}_{Y/S}(\sheaf{G}) \ar[r] \ar[d]&
	0 \\
	0 \ar[r] &
	\Sym^r_{\sheaf{O}_X}(\Omega^1_{X/S})\otimes_{\sheaf{O}_X}f^*\sheaf{G} \ar[r] &
	\PP^r_{X/S}(f^*\sheaf{G}) \ar[r] &
	\PP^{r-1}_{X/S}(f^*\sheaf{G}) \ar[r] &
	0
  \end{tikzcd}
\end{equation*}
\end{lemma}

\begin{corollary}[{\cite[A, Proposition 3.3]{curves:grass}}]
\label{PP-rank}
In the setting of \cref{PP-exact}, if \(\sheaf{F}\) is locally free of rank \(n\), then \(\PP^r_{X/S}(\sheaf{F})\) is locally free of rank \(n\cdot\binom{\dim X+r}{r}\).
\end{corollary}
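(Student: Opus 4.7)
The plan is to prove this by induction on $r$, using the exact sequence from \cref{PP-exact} as the engine.

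For the base case $r=0$, the $0$-th infinitesimal neighborhood of the diagonal is just the diagonal itself, so $\PP^0_{X/S}(\sheaf{F}) \cong \sheaf{F}$, which is locally free of rank $n = n \cdot \binom{\dim X}{0}$. For the inductive step, assume $\PP^{r-1}_{X/S}(\sheaf{F})$ is locally free of rank $n \cdot \binom{m+r-1}{r-1}$, where $m = \dim X$. Since $X \to S$ is differentially smooth, $\Omega^1_{X/S}$ is locally free of rank $m$, so $\Sym^r_{\sheaf{O}_X}(\Omega^1_{X/S})$ is locally free of rank $\binom{m+r-1}{r}$, and tensoring with the locally free $\sheaf{F}$ of rank $n$ yields a locally free sheaf of rank $n \cdot \binom{m+r-1}{r}$.

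Now I would plug these into the exact sequence
\[
    0 \to \Sym^r_{\sheaf{O}_X}(\Omega^1_{X/S}) \otimes_{\sheaf{O}_X} \sheaf{F} \to \PP^r_{X/S}(\sheaf{F}) \to \PP^{r-1}_{X/S}(\sheaf{F}) \to 0
\]
from \cref{PP-exact}. Since the quotient $\PP^{r-1}_{X/S}(\sheaf{F})$ is locally free (hence locally projective), the sequence splits Zariski-locally on $X$, so $\PP^r_{X/S}(\sheaf{F})$ is locally the direct sum of the two outer terms and is therefore locally free. Its rank is additive across a short exact sequence, giving
\[
    \operatorname{rank} \PP^r_{X/S}(\sheaf{F}) = n\binom{m+r-1}{r} + n\binom{m+r-1}{r-1} = n\binom{m+r}{r}
\]
by Pascal's identity, completing the induction.

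There is no real obstacle here since the exact sequence in \cref{PP-exact} does all the structural work; the only things to verify are the standard rank of $\Sym^r$ of a locally free sheaf, the splitting of an extension by a locally free quotient, and Pascal's identity. The argument is entirely formal given \cref{PP-exact}.
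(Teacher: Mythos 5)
Your proof is correct: the paper itself offers no argument (it simply cites \cite[A, Proposition 3.3]{curves:grass}), and your induction on $r$ via the exact sequence of \cref{PP-exact}, with the local splitting coming from the locally free quotient and the rank count finished by Pascal's identity, is exactly the standard argument behind that reference. The only convention to note is that the stated rank uses $\operatorname{rank}\Omega^1_{X/S}=\dim X$, which is implicit in the corollary's formulation and consistent with your step computing $\operatorname{rank}\Sym^r(\Omega^1_{X/S})=\binom{\dim X+r-1}{r}$.
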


We will only need the case $r=1$; in that case the exact sequence \cref{eq:seqparts} becomes
\begin{equation}
\label{eq:seqparts:one}
  \begin{tikzcd}[column sep=small]
    0 \ar[r] &
	\Omega^1_{X/S}\otimes_{\sheaf{O}_X}\sheaf{F} \ar[r] &
	\PP^1_{X/S}(\sheaf{F}) \ar[r] &
	\sheaf{F} \ar[r] &
	0.
  \end{tikzcd}
\end{equation}
Furthermore, we will mainly be interested in the case $X=\bP^n_{\bF_q}$, $S=\Spec(\bF_q)$, and $\sheaf{F}=\sheaf{O}_{\bP^n}$. This satisfies the conditions of \cref{PP-exact} since the structure morphism $\bP^n_{\bF_q}\to\Spec(\bF_q)$ is smooth, hence differentially smooth, and \cref{PP-rank} implies that $\PP^1_{\bP^n}$ is locally free of rank $n+1$.

\section{Counterexamples to most general Taylor conditions} \label{counterex}

The following example shows that arbitrary set-theoretic constructions of Taylor conditions even on \(\sheaf{O}_{\bP^n}(d)\), \(d\geq 0\), can produce local probabilities whose product is not the asymptotic global probability of the condition being satisfied.

\begin{example}[Diagonal argument] \label{diag}
Let \(X=\bP^n_{\bF_q}\) and \(\sheaf{F}=\sheaf{O}_{\bP^n}\).
Both the union of global sections \(S_d\) over all \(d\geq 0\) and the set of closed points of \(\bP^n\) are countably infinite; let \(f_1,f_2,\dots\) and \(x_1,x_2,\dots\) be enumerations of them, respectively.
For each \(i\), fix an isomorphism \(\restr{\sheaf{O}_{\bP^n}(d)}{x_i^{(1)}}\cong \restr{\sheaf{O}_{\bP^n}}{x_i^{(1)}}\).
Define a 1-infinitesimal Taylor condition \(\calT_d\) on \(\sheaf{O}_{\bP^n}(d)\) as follows: for each \(i\), identify \(\restr{\sheaf{O}_{\bP^n}(d)}{x_i^{(1)}}\) with \(\restr{\sheaf{O}_{\bP^n}}{x_i^{(1)}}\) under the fixed isomorphism and set \(\calT_{d,x_i}\) to be all of \(\restr{\sheaf{O}_{\bP^n}}{x_i^{(1)}}\) except the Taylor expansion of \(f_i\) (this does not depend on \(d\)).
Then the local probabilities are \(p_{x_i}=1-q^{-(n+1)\deg(x_i)}\) and the product over all closed points is \(\zeta_{\bP^n}(n+1)^{-1}\).

Globally, however, no section \(f\in S_\homog\) can satisfy this Taylor condition.
Indeed, define
  \[
    \calP_d = \{ f\in S_d\mid \text{\(f\) satisfies \(\calT_d\) at all closed \(x\in\bP^n\)} \}.
  \]
By construction, if \(f=f_i\) in our enumeration, then \(\calT_{d,x_i}\) excludes the Taylor expansion of \(f\), so \(f\) fails \(\calT_d\) at \(x_i\). Thus \(\calP_d=\emptyset\) for all \(d\), and
  \[
    \lim_{d\to\infty} \Prob(f\in \calP_d)
    = 0
    \neq \prod_{i=1}^\infty p_{x_i} = \zeta_{\bP^n}(n+1)^{-1}.
  \]
\end{example}

Some algebraic nature to the condition is likely necessary in general.
In \cref{subsheaf-bertini}, this manifests as ``locally free quotients of the sheaf of differentials''.

\section{More general Taylor conditions} \label{proof-sec}

We now use Poonen's method of the closed point sieve to prove our main theorems.
Throughout this section, let notation be as in \cref{subsheaf-TC-finite}.

\subsection{Points of low degree}

The following lemma says that for finitely many closed points, the local probabilities are independent.

\begin{lemma}[Points of low degree] \label{low-deg}
Let \(U_{<e}\) be the closed points of \(U\) of degree less than \(e\).
Define
  \[
    \calP_{d,e}^\low := \{f\in S_d\mid f \text{ satisfies \(\calT_d\) at all \(x\in U_{<e}\) and \(\restr{f}{Z}\in T\)}\}.
  \]
Then
  \[
    \lim_{d\to\infty} \Prob(f\in \calP_{d,e}^\low)
    = \frac{\# T}{\# H^0(Z,\sheaf{O}_Z)} \prod_{x\in U_{<e}}\bigl(1-q^{-(\ell+1)\deg(x)}\bigr).
  \]
\end{lemma}

\begin{proof}
Let \(U_{<e}=\{x_1,\dots,x_s\}\).
By definition, \(f\in S_d\) fails \(\calT_d\) at \(x_i\) for some \(i\in\{1,\dots,s\}\) if and only if it vanishes under the composition
  \[
    S_d
    \to \sheaf{O}_{\bP^n}(d)_{x_i}/\frakm^2_{x_i}
    \to \restr{\sheaf{E}_d}{x_i}
  \]
Thus \(\calP_{d,e}^\low\) consists of the preimage of \(T\times\prod_{i=1}^s (\restr{\sheaf{E}_d}{x_i}-\{0\})\) under the composition
  \[
    S_d
    \to H^0(Z,\sheaf{O}_Z(d)) \times \prod_{i=1}^s \sheaf{O}_{\bP^n}(d)_{x_i}/\frakm^2_{x_i}
    \to H^0(Z,\sheaf{O}_Z) \times \prod_{i=1}^s \restr{\sheaf{E}_d}{x_i}.
  \]
The first map is surjective for \(d\gg 1\) by \cite[Lemma 2.1]{poonen:bertini} and the second since \(\iota^*\PP^1(\sheaf{O}_{\bP^n}(d))\to\sheaf{E}_d\) is surjective and \(H^0(Z,\sheaf{O}_Z(d))\cong H^0(Z,\sheaf{O}_Z)\), so the composition is surjective.

We have a filtration of \(\kappa(x_i)\)-vector spaces \(0\subset \restr{\sheaf{Q}(d)}{x_i}\subset \restr{\sheaf{E}_d}{x_i}\) whose quotients \(\restr{\sheaf{Q}(d)}{x_i}\) and \(\restr{\sheaf{E}_d}{x_i}/\restr{\sheaf{Q}(d)}{x_i}\) have dimensions \(\ell\) and \(1\), respectively, hence \(\restr{\sheaf{E}_d}{x_i}-\{0\}\) has size \(q^{(\ell+1)\deg(x_i)}-1\), and the local probability of vanishing is \(1-q^{-(\ell+1)\deg(x_i)}\).
As this does not depend on \(d\), the result follows.
\end{proof}

\subsection{Points of medium degree}

\begin{lemma}[Points of medium degree] \label{med-deg}
For \(e>0\), define
  \[
    \calQ_{d,e}^\med := \{f\in S_d\mid f \text{ fails \(\calT_d\) at some \(x\in U\) with } e\leq\deg(x)\leq \tfrac{d}{\ell+1}\}.
  \]
Then
  \[
    \lim_{e\to\infty}\lim_{d\to\infty}\Prob(f\in\calQ_{d,e}^\med) = 0.
  \]
\end{lemma}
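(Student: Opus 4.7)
The plan is to establish a sharp local probability bound at each closed point $x$ of medium degree and then combine it with a union bound, leveraging the hypothesis $\ell\geq m$ to force the resulting sum to converge.

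First, for any closed point $x\in X$ with $(\ell+1)\deg(x)\leq d$, I would show that the evaluation map $S_d\to\restr{\sheaf{E}_d}{x}$ is surjective. This is an analogue of Poonen's surjectivity lemma \cite[Lemma 2.1]{poonen:bertini}: identify $\restr{\sheaf{E}_d}{x}$ with the global sections of an Artinian subscheme of $\bP^n$ twisted by $\sheaf{O}(d)$, whose $\bF_q$-dimension equals $(\ell+1)\deg(x)$ by the filtration $0\subset \restr{\sheaf{Q}(d)}{x}\subset\restr{\sheaf{E}_d}{x}$ already used in \cref{low-deg}. The condition $(\ell+1)\deg(x)\leq d$ is precisely the threshold needed for Poonen's argument (Castelnuovo--Mumford-style regularity applied to this finite subscheme) to yield surjectivity. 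Granted surjectivity, the local probability that $f$ fails $\calT_d$ at $x$ is exactly $q^{-(\ell+1)\deg(x)}$, by the same count as in \cref{low-deg}.

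Next, using the standard bound $\#\{x\in X:\deg(x)=e'\}\leq Cq^{me'}$ for a quasiprojective variety of dimension $m$ (coming from $\#X(\bF_{q^{e'}})=O(q^{me'})$), the union bound gives
\[ \Prob(f\in\calQ_{e,d}^\med) \leq \sum_{e'=e}^{\lfloor d/(\ell+1)\rfloor} Cq^{me'}\cdot q^{-(\ell+1)e'} \leq C\sum_{e'\geq e} q^{-(\ell+1-m)e'}. \]
Because the hypothesis $\ell\geq m$ forces $\ell+1-m\geq 1>0$, the right-hand side is a convergent geometric series whose tail tends to $0$ as $e\to\infty$, uniformly in $d$; this yields the conclusion.

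The main obstacle is the single-point surjectivity in the first step. Poonen's original statement covers evaluation onto the full jet space $\sheaf{O}_{\bP^n}(d)_x/\frakm_x^2$, whose $\bF_q$-dimension is $(n+1)\deg(x)$, and naively composing with the surjection $\sheaf{O}_{\bP^n}(d)_x/\frakm_x^2\twoheadrightarrow\restr{\sheaf{E}_d}{x}$ would only guarantee surjectivity when $d\geq(n+1)\deg(x)$, rather than the sharper $d\geq(\ell+1)\deg(x)$ built into the definition of $\calQ_{e,d}^\med$. Since $\ell\leq n$, this sharper bound is essential; one must instead apply Poonen's argument directly to a finite subscheme of $\bP^n$ whose global sections in degree $d$ realize $\restr{\sheaf{E}_d}{x}$.
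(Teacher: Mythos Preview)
Your proposal is correct and follows essentially the same approach as the paper: establish surjectivity of $S_d\to\restr{\sheaf{E}_d}{x}$ for $(\ell+1)\deg(x)\leq d$ via Poonen's \cite[Lemma~2.1]{poonen:bertini}, deduce the local failure probability $q^{-(\ell+1)\deg(x)}$, and then use the Lang--Weil bound together with $\ell\geq m$ to sum the geometric tail. If anything, you are more explicit than the paper about the key surjectivity step---the paper simply asserts that Poonen's argument ``works exactly the same,'' whereas you correctly isolate that one must realize $\restr{\sheaf{E}_d}{x}$ as $H^0(Y,\sheaf{O}_Y(d))$ for a finite subscheme $Y\subset\bP^n$ of length $(\ell+1)\deg(x)$ (rather than compose through the full jet space of length $(n+1)\deg(x)$) to get the sharp threshold.
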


\begin{proof}
Let \(x\) be a closed point of \(U\) with \(e\leq \deg(x)\leq \frac{d}{\ell+1}\). We have \(\dim_{\bF_q}\restr{\sheaf{E}_d}{x} = (\ell+1)\deg(x)\leq d\) by assumption.
Note the argument in \cite[Lemma 2.1]{poonen:bertini} works exactly the same here with the map \(S_d\to\restr{\sheaf{E}_d}{x}\), so this map is surjective and identical reasoning as in \cite[Lemma 2.3]{poonen:bertini} shows the fraction of \(f\in S_d\) that vanish in \(\restr{\sheaf{E}_d}{x}\) is \(q^{-(\ell+1)\deg(x)}\).

Now we follow Poonen's proof of \cite[Lemma 2.4]{poonen:bertini}.
By \cite{lang-weil}, there is a constant \(c>0\) depending only on \(U\) such that \(\# U(\bF_{q^{r}})\leq cq^{rm}\).
With the result above, this gives
\begin{align*}
  \Prob(f\in \calQ_{d,e}^\med) &\leq \sum_{r=e}^{\floor{d/(\ell+1)}}(\text{\# of points of degree \(r\)})\cdot q^{-(\ell+1)r} \\
  &\leq \sum_{r=e}^{\floor{d/(\ell+1)}} \# U(\bF_{q^{r}})\cdot q^{-(\ell+1)r} \\
  &\leq \sum_{r=e}^\infty cq^{rm}q^{-(\ell+1)r}
\end{align*}
Since \(\ell\geq m\), this converges to \(\frac{cq^{e(m-\ell-1)}}{1-q^{m-\ell-1}}\). This is independent of \(d\) and goes to zero as \(e\) goes to \(\infty\).
\end{proof}

\subsection{Points of high degree}

As usual with proofs using the closed point sieve, showing the contribution from high degree points is negligible is the hardest part of the proof.

\begin{lemma}[Points of high degree] \label{high-deg}
Define
  \[
    \calQ_d^\high := \{f\in S_d\mid f \text{ fails \(\calT_d\) at some \(x\in U_{>\frac{d}{\ell+1}}\)}\}.
  \]
Then \(\lim_{d\to\infty} \Prob(f\in\calQ_d^\high)=0\).
\end{lemma}

\begin{proof}
As in \cite[Lemma 2.6]{poonen:bertini}, we reduce to the affine case \(\iota:U\into\bA^n\), also dehomogenizing to identify \(S_d\) with \(A_{\leq d}\).

Pulling back the exact sequence \cref{eq:seqparts:one}, we have the commutative diagram
  \[
    \begin{tikzcd}[row sep=scriptsize,column sep=small]
      0 \ar[r] & \iota^*\Omega_{\bA^n/\bF_q}^1 \ar[r,hookrightarrow]
      \ar[d] & \iota^*\PP^1(\sheaf{O}_{\bA^n}) \ar[r] \ar[d] & \sheaf{O}_U \ar[r] & 0 \\
      0 \ar[r] & \sheaf{Q} \ar[r,hookrightarrow] & \sheaf{E} & &
    \end{tikzcd}
  \]
Given a closed point \(x\in U\), the map on fibers \(\restr{\iota^*\Omega_{\bA^n/\bF_q}^1}{x}\to\restr{\sheaf{Q}}{x}\) is surjective, and every element of \(\restr{\iota^*\Omega_{\bA^n/\bF_q}^1}{x}\cong \frakm_x/\frakm_x^2\) is the restriction of some \(dt\) to \(x\) where \(t\) is some element of \(A\).
Choose \(t_1,\dots,t_\ell\in A\) such that the restrictions of \(dt_1,\dots,dt_\ell\) to \(x\) map to a \(\kappa(x)\)-basis of \(\restr{\sheaf{Q}}{x}\).
By Nakayama's lemma, the elements \(dt_1,\dots,dt_\ell\) in the stalk \((\iota^*\Omega_{\bA^n/\bF_q}^1)_x\) map to an \(\sheaf{O}_{U,x}\)-basis for \(\sheaf{Q}_x\).
Call this basis \(Q_1,\dots,Q_\ell\) and let \(\partial_1,\dots,\partial_\ell\) be the corresponding dual basis.

Now we mimic the proof of \cite[Lemma 2.6]{poonen:bertini}.

We have
\begin{equation*}
  \begin{aligned}
    \Hom_{\sheaf{O}_{U,x}}(\sheaf{Q}_x,\sheaf{O}_{U,x})
    &\subset \Hom_{\sheaf{O}_{U,x}}((\iota^*\Omega^1_{\bA^n})_x,\sheaf{O}_{U,x}) \\
    &= \Hom_{\sheaf{O}_{\bA^n,x}}(\Omega^1_{\bA^n,x},\sheaf{O}_{U,x}) \\
    &= \Der_{\bF_q}(\sheaf{O}_{\bA^n,x},\sheaf{O}_{U,x})
  \end{aligned}
\end{equation*}
where the first inclusion follows since \( \Hom_{\sheaf{O}_{U,x}}(-,\sheaf{O}_{U,x})\) is contravariant left exact.
Thus we can think of the dual basis elements \(\partial_i\) as \(\bF_q\)-derivations \(\sheaf{O}_{\bA^n,x}\to\sheaf{O}_{U,x}\).
Let $I(U)\subset A$ be the ideal of polynomials vanishing on $U$.
Choose \(s\in A/I(U)\) with \(s(x)\neq 0\) to clear denominators so \(D_i=s\partial_i\) is a global derivation \(A\to A/I(U)\).
We can find a neighborhood \(N_x\) of \(x\) on which \(Q_1,\dots,Q_\ell\) generate \(\sheaf{Q}\) and such that \(s\in\sheaf{O}_U(N_x)^\times\).
As we can cover \(U\) with finitely many such \(N_x\), we may assume \(U\subset N_x\), and that the \(Q_1,\dots,Q_\ell\) generate \(\sheaf{Q}\) globally.

Set \(\tau=\max_i\{\deg t_i\}\), \(\gamma=\floor{(d-\tau)/p}\), and \(\eta=\floor{d/p}\).
If \(f_0\in A_{\leq d}\), \(g_1,\dots,g_\ell\in A_{\leq\gamma}\), and \(h\in A_{\leq\eta}\) are selected uniformly at random, then the distribution of
  \[
    f = f_0+g_1^pt_1+\dots+g_\ell^pt_\ell+h^p
  \]
is uniform over \(A_{\leq d}\).
We will bound the probability that for such an \(f\), there is a closed point \(y\in U_{>d/(\ell+1)}\) where \(f\) is zero in the fiber of \(\sheaf{E}\) at \(y\).
Let \(1\) be the constant function in \(\iota^*\PP^1(\sheaf{O}_{\bA^n})\), and \(R\) its image in \(\sheaf{E}\).
Then \(R,Q_1,\dots,Q_\ell\) are a basis for \(\sheaf{E}\), giving a trivialization \(\sheaf{E}\cong \sheaf{O}_U^{\ell+1}\).
In this trivialization, the map sending a polynomial \(f\) to its first order Taylor expansion in \(\iota^*\PP^1(\sheaf{O}_{\bA^n})\) then to \(\sheaf{E}\) is given by \((f,\partial_1 f,\dots,\partial_\ell f)\).
Thus \(f\) is zero in \(\restr{\sheaf{E}}{y}\) if and only if \(f(y)=(D_1 f)(y)=\dots=(D_\ell f)(y)=0\).

Since \(\operatorname{char}\bF_q=p\), we have
\begin{align*}
  D_i f &= D_i f_0 + g_1^p D_i t_1 + t_1 pD_i g_1 + \dots + g_\ell^p D_i t_\ell + t_\ell pD_i g_\ell + pD h \\
  &= D_i f_0+g_i^ps
\end{align*}
for \(i=1,\dots,\ell\). By abuse of notation we will consider the \(D_if\) as defining hypersurfaces in \(\bA^n\) by choosing a lift to \(A\) of minimal degree.
Define
  \[
    W_i = U \cap \{D_1f=\dots=D_if=0\}.
  \]
\begin{claim}
For \(0\leq i\leq \ell-1\), conditioned on a choice of \(f_0,g_1,\dots,g_i\) such that \(\dim W_i\leq \max(m-i,0)\), the probability that \(\dim W_{i+1}\leq \max(m-i-1,0)\) is \(1-o(1)\) as \(d\to\infty\). (The function of \(d\) represented by \(o(1)\) depends on \(U\) and the \(D_i\).)
\end{claim}

The statement is clear when $i>m$ so we may assume $\ell=m$.

Let \(V_1,\dots,V_e\) be the \((m-i)\)-dimensional irreducible components of \((W_i)_\red\).
By Bézout's theorem,
  \[
    e
    \leq (\deg\,\overline{U})(\deg D_1f)\,\cdots\,(\deg D_if)
    = O(d^i)
  \]
as \(d\to\infty\), where \(\,\overline{U}\) is the projective closure of \(U\).
As \(\dim V_k\geq 1\), there exists a coordinate \(x_j\), depending on \(k\), such that the projection \(x_j(V_k)\) has dimension 1.

We want to bound the set
  \[
    G_k^{\mathrm{bad}}
    := \{g_{i+1}\in A_{\leq\gamma} \mid D_{i+1} f=D_{i+1}f_0+g_{i+1}^ps\text{ vanishes identically on \(V_k\)}\}
  \]
since for any \(g_{i+1}\in G_k^{\mathrm{bad}}\), \(V_k\subset W_{i+1}\) and then \(\dim W_{i+1}\) would fail to be \(\leq m-i-1\).

If \(g,g'\in G_k^{\mathrm{bad}}\), then on \(V_k\),
\begin{align*}
  0 &= \frac{g^ps-g'^ps}{s} \\
  &= g^p-g'^p \\
  &= (g-g')^p
\end{align*}
so if \(G_k^{\mathrm{bad}}\) is nonempty, it is a coset of the subspace of functions in \(A_{\leq\gamma}\) that vanish on \(V_k\).
The codimension of that subspace is at least \(\gamma+1\) since a nonzero polynomial in \(x_j\) does not vanish on \(V_k\).
Thus the probability that \(D_{i+1}f\) vanishes on some \(V_k\) is at most \(e q^{-(\gamma+1)}=o(1)\) as \(d\to\infty\).

\begin{claim}
Conditioned on a choice of \(f_0,g_1,\dots,g_\ell\) for which \(W_\ell\) is finite, \(\Prob(H_f\cap W_\ell\cap U_{>d/(\ell+1)}=\emptyset)=1-o(1)\) as \(d\to\infty\).
\end{claim}

In fact, we need only show this for \(H_f\cap W_m\cap U_{>d/(\ell+1)}\). 
The same Bézout argument as above shows \(\#W_m\) is \(O(d^m)\).
For a given \(y\in W_m\), the set \(H^{\mathrm{bad}}\) of \(h\in A_{\leq\eta}\) for which \(H_f\) passes through \(y\) is either empty or a coset of \(\ker(\eval_y:A_{\leq\eta}\to\kappa(y))\).

If \(\deg(y)>\frac{d}{\ell+1}\), then \cite[Lemma 2.5]{poonen:bertini} implies \(\frac{\#H^{\mathrm{bad}}}{\#A_{\leq\eta}}\leq q^{-\nu}\) where \(\nu=\min(\eta+1,\frac{d}{\ell+1})\).
Hence
  \[
    \Prob(H_f\cap W_m\cap U_{>d/(\ell+1)}\neq\emptyset)
    \leq \#W_mq^{-\nu}=O(d^mq^{-\nu})
  \]
which by assumption is \(o(1)\) as \(d\to\infty\).

Given the two claims, we have
\begin{multline*}
  \lim_{d\to\infty}\Prob\bigl(\dim W_i=m-i \text{ for all \(1\leq i\leq\ell\) and } H_f\cap W_m\cap U_{>d/(\ell+1)}=\emptyset\bigr) \\
  \begin{aligned}
    &= \prod_{i=0}^{m-1} (1-o(1))\cdot (1-o(1)) \\
    &= 1-o(1).
  \end{aligned}
\end{multline*}
So the same holds for \(W_\ell\).
But now \(H_f\cap W_\ell\) is the subvariety of \(U\) defined by failing \(\calT_d\), so \(H_f\cap W_\ell\cap U_{>d/(\ell+1)}\) is the set of points of degree \(>\frac{d}{\ell+1}\) where \(H_f\cap U\) fails \(\calT_d\).
\end{proof}

\subsection{Proofs of Theorems \ref*{subsheaf-bertini}, \ref*{subsheaf-TC-finite}, and \ref*{subsheaf-TC-infinite}}

\begin{proof}[Proof of \cref{subsheaf-TC-finite}]
We have
  \[
    \calP_d\subseteq \calP_{d,e}^\low
    \subseteq \calP_d\cup \calQ_{d,e}^\med\cup\calQ_d^\high
  \]
so
\begin{align*}
  \Prob(s\in \calP_{d,e}^\low)
  &\geq \Prob(s\in\calP_d) \\
  &\geq \Prob(s\in \calP_{d,e}^\low) - \Prob(s\in \calQ_{d,e}^\med) - \Prob(s\in \calQ_d^\high).
\end{align*}
By \cref{low-deg,med-deg,high-deg}, letting \(d\), then \(e\) go to \(\infty\) gives the result.
\end{proof}

\begin{proof}[Proof of \cref{subsheaf-bertini}]
Take \(Z=\emptyset\) in \cref{subsheaf-TC-finite}.
\end{proof}

\begin{proof}[Proof of \cref{subsheaf-TC-infinite}]
The reasoning here is the same as in the proof of \cite[Theorem 1.3]{poonen:bertini}. Given a condition on sections no stronger than nonvanishing in the fiber of a single \(\sheaf{E}_{i,d}\) at all except finitely many points, the probability of failing this condition goes to \(0\) as \(d\to\infty\) by \cref{med-deg,high-deg}. Now considering a condition no stronger than nonvanishing in the fiber of each of \(\sheaf{E}_{1,d},\dots,\sheaf{E}_{u,d}\) at all except finitely many points, the probability of failing is still zero as this is a finite union of sets with probability zero. Thus we can approximate \(\calP_d\) by the sets \(\calP_{d,e}^\low\) defined by satisfying the condition at points of degree at most \(e\). But now the result follows by \cite[Lemma 2.1]{poonen:bertini} and identical reasoning as in the proof of \cref{low-deg}.
\end{proof}

\section{Applications} \label{applications}

\begin{example}[Poonen's Bertini]
To get \cite[Theorem 1.1]{poonen:bertini}, assume \(X\) is smooth and take \(\sheaf{Q}=\Omega^1_{X/\bF_q}\) in \cref{subsheaf-bertini}. Similarly, \cite[Theorem 1.2]{poonen:bertini} follows from \cref{subsheaf-TC-finite}.

Our \cref{subsheaf-TC-infinite} does not imply \cite[Theorem 1.3]{poonen:bertini} since we don't work with the completed local rings, however it does imply the weaker version where, at each point, one only controls the Taylor expansion up to finitely many terms.
\end{example}

\begin{example} \label{rel-smooth-app}
Let \(X\) be a quasiprojective subscheme of \(\bP^n_{\bF_q}\) of dimension \(m\) with locally closed embedding \(\iota\) and let \(\Delta:X\into X\times_{\bF_q}\bP^n\) be the graph of \(\iota\).
Suppose \(j:Z\into X\times_{\bF_q}\bP^n\) is a closed embedding such that the projection \(\varphi:Z\to X\) is smooth of relative dimension \(\ell\geq m\), and such that \(\Delta\) factors as
  \[
    X \xrightarrow{\alpha} Z \xhookrightarrow{j} X\times\bP^n
  \]
for some morphism \(\alpha:X\to Z\).

We have a surjection of sheaves
  \[
    \Omega^1_{X\times\bP^n/X}\onto j_*\Omega^1_{Z/X}
  \]
which induces a surjection
  \[
    \Delta^*\Omega^1_{X\times\bP^n/X}\onto \Delta^*j_*\Omega^1_{Z/X}.
  \]
The left side is isomorphic to \(\iota^*\Omega^1_{\bP^n}\); indeed, let \(p:X\times\bP^n\to\bP^n\) be projection onto the second coordinate.
Then by standard base change for the sheaf of differentials,
\begin{align*}
  \Delta^*\Omega^1_{X\times\bP^n/X} &\cong \Delta^*p^*\Omega^1_{\bP^n/\bF_q} \\
  &= (p\circ\Delta)^*\Omega^1_{\bP^n/\bF_q} \\
  &= \iota^*\Omega^1_{\bP^n/\bF_q}.
\end{align*}
Define \(\sheaf{Q}=\Delta^*j_*\Omega^1_{Z/X}\).
This is locally free: by assumption, \(\Delta=j\circ\alpha\) so \(\sheaf{Q}=\alpha^*j^*j_*\Omega^1_{Z/X}\cong \alpha^*\Omega^1_{Z/X}\).
As \(\varphi\) is smooth of relative dimension \(\ell\), \(\Omega^1_{Z/X}\) is locally free of rank \(\ell\) and thus so is \(\sheaf{Q}\).

With \(\sheaf{Q}\) as above, define \(\sheaf{E}_d\), \(\calT_d\), and \(\calP_d\) as in \cref{subsheaf-bertini}.
Applying the theorem, we get
  \[
    \lim_{d\to\infty} \Prob(f\in \calP_d) = \zeta_X(\ell+1)^{-1}.
  \]
\end{example}

\begin{example} \label{curve-ex}
We now answer \cref{curve-q} as a specific instance of \cref{rel-smooth-app}.
Assume \(\operatorname{char}(\bF_q)\neq 2\).
Choose a finite, reduced, degree 4 subscheme \(Y\) of \(\bP^2_{\bF_q}\) whose points are geometrically in general position.
Let \(\iota:X\into \bP^2_{\bF_q}\) be a curve whose geometric points are in general position with the points of \(Y\).
Then for each closed point \(x\in X\), there is a unique smooth conic \(C_x\) (defined over \(\kappa(x)\)) passing through \(x\) and each point of \(Y\). Let \(j:C\into X\times\bP^2\) be the inclusion of the subscheme \(C\) parameterizing the data \(\{(x,y)\mid x\in X,y\in C_x\}\).
Then \(\Delta\) factors as \(j\circ\alpha\) where \(\alpha\) is the diagonal into \(C\), and \(\varphi:C\to X\) is smooth of relative dimension \(1\), so the conditions of the example are satisfied.

Let \(f\in S_d\).
With \(\sheaf{Q}\) defined as in \cref{rel-smooth-app}, the hypersurface \(H_f\) intersects \(C_x\) transversely at \(x\) if and only if it does not vanish in the fiber of \(\sheaf{Q}\) at \(x\).
Thus the example above shows the probability that a random plane curve intersects \(C_x\) transversely at \(x\) for all closed \(x\in X\) is \(\zeta_X(2)^{-1}\).
\end{example}

\begin{example}
Let \(L\) be the line at infinity in \(\bP^2_{\bF_q}\); write the homogeneous coordinates on \(\bP^2\) as \(x_0,x_1,x_2\). In the affine chart \(x_0\neq 0\), choose \(\bF_q\)-points \(P_1=(0,0)\), \(P_2=(0,1)\), \(P_3=(1,0)\), and \(P_4=(1,1)\).  Then the lines through pairs of points in \(P_1,P_2,P_3,P_4\) intersect \(L\) in four points; set \(U\) to be \(L\) with these four points removed and \(Y:=\{P_1,P_2,P_3,P_4\}\). Define \(C_x\) as above. By \cref{curve-ex}, the probability that a random plane curve intersects \(C_x\) transversely at \(x\) for all \(x\in U\) is \(\zeta_U(2)^{-1}\). Recall that for a scheme \(X\) of finite type over \(\bF_q\) with closed subscheme \(Z\), we have \(\zeta_{X\setminus Z}(s)=\zeta_X(s)/\zeta_Z(s)\). Writing \(Z=\{Q_1,Q_2,Q_3,Q_4\}\) for the set of four points removed from \(L\), this implies
  \[
    \zeta_U(2)^{-1}
    = \frac{\zeta_Z(2)}{\zeta_L(2)}
    = \frac{(1-q^{-2})^{-4}}{\frac{1}{(1-q^{-2})(1-q^{-1})}}
    = \frac{1-q^{-1}}{(1-q^{-2})^3}.
  \]
\end{example}

\bibliographystyle{amsalpha}
\bibliography{axiomatic-bertini}

\end{document}